\newcommand{\unito}{{\mathds 1}}
\let\eps\varepsilon
\DeclareMathOperator\ev{ev}
\DeclareMathOperator\id{id}
\DeclareMathOperator\Id{Id}
\DeclareMathOperator\inl{inl}
\DeclareMathOperator\inr{inr}
\DeclareMathOperator\Ob{Ob}
\DeclareMathOperator\op{op}
\DeclareMathOperator\pr{pr}
\DeclareMathOperator\C{\mathcal{C}}
\DeclareMathOperator\D{\mathcal{D}}
\DeclareMathOperator\Set{\mathbf{Set}}
\providecommand{\url}[1]{{\tt #1}}
\newtheorem{proposition}[subsection]{Proposition}
\theoremstyle{definition}
\newtheorem{remark}[subsection]{Remark}
\newtheorem{definition}[subsection]{Definition}
\numberwithin{equation}{section}
\newcommand{\proref}[1]{Proposition~\ref{#1}}
\newcommand{\diaref}[1]{Diagram~\ref{#1}}
\title{Calculating monad transformers with category theory}
\author{Oleksandr Manzyuk}
\begin{document}
\maketitle

\begin{abstract}
  We show that state, reader, writer, and error monad transformers are
  instances of one general categorical construction: translation of
  a monad along an adjunction.
\end{abstract}

\section{Introduction}

This note is an elaboration of \cite{calc-mon-cat-th} (hence the
title).  The latter serves as a very gentle introduction to category
theory for Haskell programmers.  In particular, it explains how monads
arise from adjunctions between categories.  We take this (arguably
well-known in the category theory community) idea one step further.
We show that monads can be translated along adjunctions and illustrate
by examples how the standard monad transformers --- state, reader,
writer, error --- can be interpreted as instances of this
construction.  Notably, continuation monad transformers do not fit
into this framework.

Unlike \cite{calc-mon-cat-th}, this note gives more details, and is as
a consequence more technical.  It makes greater emphasis on category
theory rather than on programming.  Like in \cite{calc-mon-cat-th},
the reader should consider unproven or partially proven statements to
be exercises.

\section{Translating monads along adjunctions}

We begin by recalling one of the fundamental notions of category
theory: the notion of adjunction.  The reader is referred to
\cite{calc-mon-cat-th} and \cite[Chapter~IV]{MR1712872} for more
details.

\begin{definition}
  Let $\C$ and $\D$ be categories.  An \emph{adjunction} from $\C$ to
  $\D$ is a triple $(F, U, \varphi)$, where $F : \C \to \D$ and $U :
  \D \to \C$ are functors, and $\varphi$ is a family of bijections
  \[
  \varphi_{X,Y} : \D(FX, Y) \xrightarrow{\sim} \C(X, UY), \quad
  X\in\Ob\C, \quad Y\in\Ob\D,
  \]
  natural in $X$ and $Y$.  We say that $F$ is \emph{left adjoint} to
  $U$ and $U$ is \emph{right adjoint} to $F$.
\end{definition}

An adjunction $(F, U, \varphi) : \C \to \D$ determines two natural
transformations.  Namely, for a fixed object $X\in\Ob\C$, the family
of functions $\varphi_{X,-}=\{\varphi_{X, Y}\}_{Y\in\Ob\D}$ is a
natural transformation from the representable functor $\D(FX, -) : \D
\to \Set$ to the functor $\C(X, U(-)) : \D \to \Set$, which by the
Yoneda Lemma is given by
\begin{equation}
\varphi_{X,Y}(f) = U(f)\circ\eta_X, \quad f \in \D(FX, Y),
\label{eq-varphi}
\end{equation}
where $\eta_X = \varphi_{X, FX}(\id_{FX}) : X \to UFX$.  Clearly, the
family of morphisms $\eta_X$ is natural in $X$, giving rise to a
natural transformation $\eta : \Id_{\C} \to UF$, called the
\emph{unit} of the adjunction $(F, U, \varphi)$.  Similarly, for a
fixed object $Y\in\Ob\D$, the family of functions $\varphi^{-1}_{-, Y}
= \{\varphi^{-1}_{X,Y}\}_{X\in\Ob\C}$ is a natural transformation from
the representable functor $\C(-, UY) : \C^{\op} \to \Set$ to the
functor $\D(F(-), Y) : \C^{\op} \to \Set$, which by the Yoneda Lemma
is given by
\begin{equation}
\varphi^{-1}_{X,Y}(g) = \eps_Y \circ F(g), \quad g \in \C(X, UY),
\label{eq-varphi-inv}
\end{equation}
where $\eps_Y = \varphi^{-1}_{UY, Y}(\id_{UY}): FUY \to Y$.  The
family of morphisms $\eps_Y$ is natural in $Y$, giving rise to a
natural transformation $\eps : FU \to \Id_{\D}$, called the
\emph{counit} of the adjunction $(F, U, \varphi)$.  The identities
$\varphi^{-1}_{X, FX}(\eta_X) = \id_{FX}$ and $\varphi_{UY, Y}(\eps_Y)
= \id_{UY}$ translate into so called \emph{triangular} or
\emph{counit-unit} equations:
\begin{align}
  \Bigl[FX \xrightarrow{F(\eta_X)} FUFX \xrightarrow{\eps_{FX}} FX
  \Bigr] & = \id_{FX}, \label{eq-eps-F-eta-id}\\
  \Bigl[\,UY \xrightarrow{\eta_{UY}} UFUY \xrightarrow{\eps_{U(\eps_Y)}}
  UY \Bigr] & = \id_{UY}. \label{eq-U-eps-eta-id}
\end{align}
Conversely, if $\eta : \Id_{\C} \to UF$ and $\eps : FU \to \Id_{\D}$
are natural transformations satisfying equations
\eqref{eq-eps-F-eta-id} and \eqref{eq-U-eps-eta-id}, then the family
of functions $\varphi_{X,Y} : \D(FX, Y) \to \C(X, UY)$ given by
\eqref{eq-varphi} is natural in $X$ and $Y$, and each $\varphi_{X,Y}$
is invertible with the inverse given by \eqref{eq-varphi-inv}.
Therefore, an adjunction can equivalently be described as a quadruple
$(F, U, \eta, \eps)$, where $F : \C \to \D$ and $U : \D \to \C$ are
functors and $\eta : \Id_{\C} \to UF$ and $\eps : FU \to \Id_{\D}$ are
natural transformations subject to equations \eqref{eq-eps-F-eta-id}
and \eqref{eq-U-eps-eta-id}.  We refer the interested reader to
\cite[Chapter~IV, Theorem~2]{MR1712872} for more equivalent
definitions of an adjunction.

Every adjunction $(F, U, \eta, \eps)$ from $\C$ to $\D$ gives rise to
a monad $(P, e^P, m^P)$ on $\C$, where $P = UF$, $e^P_X = \eta_X : X
\to UFX$, and $m^P_X = U(\eps_{FX}) : UFUFX \to UFX$.  The following
proposition is a mild generalization of this observation.  The latter
can be recovered by taking $T$ to be the identity monad.

\begin{proposition}\label{pro-trans-monad-along-adjunction}
  Let $(F, U, \eta, \eps)$ be an adjunction from $\C$ to $\D$.
  Suppose that $(T, e^T, m^T)$ is a monad on the category $\D$.  Then
  the functor $P = UTF : \C \to \C$ equipped with the natural
  transformations
  \begin{align}
    e^P_X & = \Bigl[ X \xrightarrow{\eta_X} UFX \xrightarrow{U(e^T_{FX})}
    UTFX \Bigr],
    \\
    m^P_X & = \Bigl[ UTFUTFX \xrightarrow{UT(\eps_{TFX})} UTTFX
    \xrightarrow{U(m^T_{FX})} UTFX \Bigr]
  \end{align}
  is a monad on the category $\C$.
\end{proposition}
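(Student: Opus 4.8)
The plan is to verify directly that $(P, e^P, m^P)$ satisfies the three monad axioms, namely the two unit laws $m^P_X \circ P(e^P_X) = \id_{PX}$ and $m^P_X \circ e^P_{PX} = \id_{PX}$ and the associativity law $m^P_X \circ P(m^P_X) = m^P_X \circ m^P_{PX}$. Since $P = UTF$ and both $e^P$ and $m^P$ are assembled from $F$, $U$, $T$ applied to the adjunction data $\eta$, $\eps$ and the monad data $e^T$, $m^T$, the uniform strategy is to reduce each identity in $\C$ to the corresponding identity in $\D$: I would push every occurrence of $\eps$, $e^T$, and $m^T$ into place using naturality, then collapse the resulting composite with the monad axioms for $T$ and with the triangular equations \eqref{eq-eps-F-eta-id} and \eqref{eq-U-eps-eta-id}. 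The only facts entering the argument are functoriality of $F$, $U$, $T$; naturality of $\eta$, $\eps$, $e^T$, $m^T$; the three axioms for $T$; and the two triangle identities.

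For the right unit law I would expand $P(e^P_X) = UTFU(e^T_{FX}) \circ UTF(\eta_X)$, then apply naturality of $\eps$ to the morphism $e^T_{FX} : FX \to TFX$, replacing $\eps_{TFX} \circ FU(e^T_{FX})$ by $e^T_{FX} \circ \eps_{FX}$. The factor $U(m^T_{FX}) \circ UT(e^T_{FX})$ then collapses to the identity by the right unit axiom of $T$, and the surviving composite $UT(\eps_{FX} \circ F(\eta_X))$ collapses to the identity by \eqref{eq-eps-F-eta-id}. The left unit law is handled dually: after expanding $e^P_{PX} = U(e^T_{FUTFX}) \circ \eta_{UTFX}$, naturality of $e^T$ applied to $\eps_{TFX}$ rewrites $T(\eps_{TFX}) \circ e^T_{FUTFX}$ as $e^T_{TFX} \circ \eps_{TFX}$; the left unit axiom of $T$ annihilates $U(m^T_{FX}) \circ U(e^T_{TFX})$; and the remaining $U(\eps_{TFX}) \circ \eta_{UTFX}$ is the identity by \eqref{eq-U-eps-eta-id} with $Y = TFX$.

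The main obstacle is associativity, where both sides are four-fold composites and no factor is visibly an identity. Expanding the definitions gives
\[
m^P_X \circ P(m^P_X) = U(m^T_{FX}) \circ UT(\eps_{TFX}) \circ UTFU(m^T_{FX}) \circ UTFUT(\eps_{TFX})
\]
and
\[
m^P_X \circ m^P_{PX} = U(m^T_{FX}) \circ UT(\eps_{TFX}) \circ U(m^T_{FUTFX}) \circ UT(\eps_{TFUTFX}).
\]
I would start from the first composite and reach the second by the following chain of rewrites: naturality of $\eps$ to slide $\eps_{TFX}$ past $FU(m^T_{FX})$; naturality of $\eps$ once more to slide it past $FUT(\eps_{TFX})$; the associativity axiom of $T$ to replace $m^T_{FX} \circ T(m^T_{FX})$ by $m^T_{FX} \circ m^T_{TFX}$; and finally naturality of $m^T$ to move $m^T_{TFX}$ across $TT(\eps_{TFX})$. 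Matching the outcome term by term with the second composite finishes the argument.

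The one genuinely delicate point is bookkeeping: keeping track of which object each instance of $\eps$, $e^T$, and $m^T$ is indexed by, since the same symbol appears at several different objects. Drawing the two composites as pasting diagrams in the $2$-category of categories (or as string diagrams) renders every naturality square and every triangle as the visibly correct instance, and I would organise the write-up around such diagrams to keep the indices under control.
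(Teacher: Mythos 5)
Your proposal is correct and is essentially the paper's own proof: the four rewrites you list for associativity are exactly the four commuting cells of the paper's associativity diagram, and your two unit-law computations traverse precisely the two paths of the paper's identity-axiom diagram (the same two naturality squares, the two triangle identities, and the two unit axioms for $T$), just written equationally instead of as a pasted diagram.
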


\begin{proof}
  Let us check the monad axioms.  The identity axioms are proven in
  \diaref{dia-identity-axioms-P}.
  \begin{figure}
  \[
  \begin{xy}
    (-55, 24)*+{UTFX}="1";
    (0, 24)*+{UFUTFX}="2";
    (55, 24)*+{UTFUTFX}="3";
    (0, 0)*+{UTFX}="4";
    (55, 0)*+{UTTFX}="5";
    (55, -24)*+{UTFX}="6";
    (-55, 0)*+{UTFUFX}="7";
    (-55, -24)*+{UTFUTFX}="8";
    (0, -24)*+{UTTFX}="9";
    {\ar@{->}^-{\eta_{UTFX}} "1";"2"};
    {\ar@{->}^-{U(e^T_{FUTFX})} "2";"3"};
    {\ar@{=} "1";"4"};
    {\ar@{->}^-{U(\eps_{TFX})} "2";"4"};
    {\ar@{->}^-{UT(\eps_{TFX})} "3";"5"};
    {\ar@{->}^-{U(e^T_{TFX})} "4";"5"};
    {\ar@{=} "4";"6"};
    {\ar@{->}^-{U(m^T_{FX})} "5";"6"};
    {\ar@{->}_-{UTF(\eta_X)} "1";"7"};
    {\ar@{->}^-{UT(\eps_{FX})} "7";"4"};
    {\ar@{->}_-{UTFU(e^T_{FX})} "7";"8"};
    {\ar@{->}^-{UT(\eps_{TFX})} "8";"9"};
    {\ar@{->}^-{U(m^T_{FX})} "9";"6"};
    {\ar@{->}_-{UT(e^T_{FX})} "4";"9"};
  \end{xy}
  \]
  \caption{Proof of the identity axioms for the monad $P$.}
  \label{dia-identity-axioms-P}
  \end{figure}
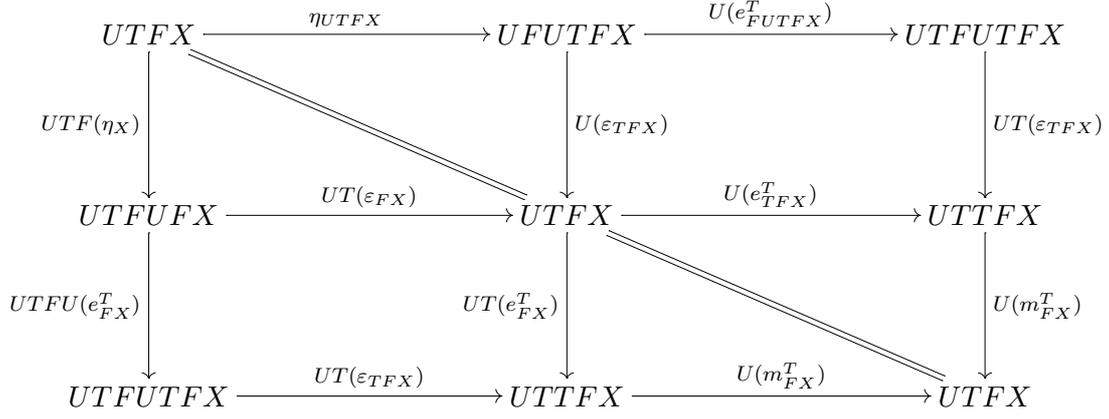
  Each cell of this diagram commutes.  The pair of top left triangles
  commute by the counit-unit equations \eqref{eq-eps-F-eta-id} and
  \eqref{eq-U-eps-eta-id}.  The pair of bottom right triangles commute
  by the identity axioms for the monad $T$.  The commutativity of the
  top right square follows from the naturality of $e^T$, while the
  commutativity of the bottom left square follows from the naturality
  of $\eps$.  The associativity axiom coincides with the exterior of
  \diaref{dia-assoc-axiom-P}.  The commutativity of the two squares on
  the left follows from the naturality of $\eps$.  The top right
  square commutes by the naturality of $m^T$.  Finally, the bottom
  right square commutes by the associativity axiom for the monad $T$.
  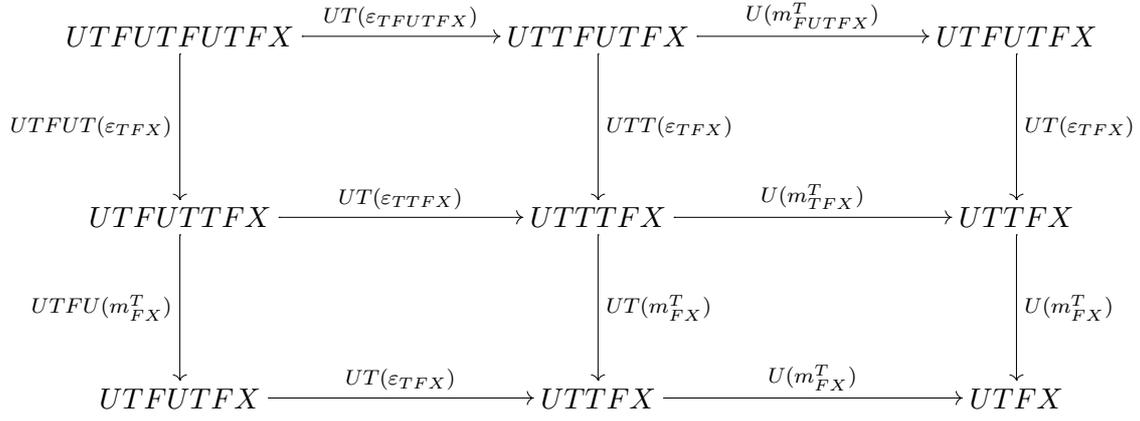
\begin{figure}
  \[
  \begin{xy}
    (-55, 24)*+{UTFUTFUTFX}="1";
    (0, 24)*+{UTTFUTFX}="2";
    (55, 24)*+{UTFUTFX}="3";
    (-55, 0)*+{UTFUTTFX}="4";
    (0, 0)*+{UTTTFX}="5";
    (55, 0)*+{UTTFX}="6";
    (-55, -24)*+{UTFUTFX}="7";
    (0, -24)*+{UTTFX}="8";
    (55, -24)*+{UTFX}="9";
    {\ar@{->}^-{UT(\eps_{TFUTFX})} "1";"2"};
    {\ar@{->}^-{U(m^T_{FUTFX})} "2";"3"};
    {\ar@{->}_-{UTFUT(\eps_{TFX})} "1";"4"};
    {\ar@{->}^-{UTT(\eps_{TFX})} "2";"5"};
    {\ar@{->}^-{UT(\eps_{TFX})} "3";"6"};
    {\ar@{->}^-{UT(\eps_{TTFX})} "4";"5"};
    {\ar@{->}^-{U(m^T_{TFX})} "5";"6"};
    {\ar@{->}_-{UTFU(m^T_{FX})} "4";"7"};
    {\ar@{->}^-{UT(m^T_{FX})} "5";"8"};
    {\ar@{->}^-{U(m^T_{FX})} "6";"9"};
    {\ar@{->}^-{UT(\eps_{TFX})} "7";"8"};
    {\ar@{->}^-{U(m^T_{FX})} "8";"9"};
  \end{xy}
  \]
  \caption{Proof of the associativity axiom for the monad $P$.}
  \label{dia-assoc-axiom-P}
  \end{figure}
\end{proof}

\begin{remark}
  \proref{pro-trans-monad-along-adjunction} allows us to translate a
  monad on the category $\D$ into a monad on the category $\C$.  In
  functional programming and denotational semantics, we are primarily
  interested in monads that are strong
  \cite[Definition~3.2]{MR1115262}.  We recall that a monad $(T, e^T,
  m^T)$ on a cartesian category $\C$ is \emph{strong} if it is
  equipped with a transformation $t_{X,Y} : TX \times Y \to T(X\times
  Y)$ natural in $X$ and $Y$ and compatible with both the cartesian
  and monad structures; see \cite[Definition~3.2]{MR1115262} for the
  precise compatibility conditions.  Under what conditions is the
  translation of a strong monad along an adjunction also a strong monad?
  I don't have a good answer.  In each of the examples we consider
  below, this question can be resolved in an ad hoc manner.  However,
  I am not aware of a general condition that applies to all the
  examples.  That is why I am going to ignore this issue henceforth.
  Note that in two out of the four examples we will need the
  assumption that the monad being translated is strong.
\end{remark}

\section{State monad transformer}

For the sake of simplicity, we assume that the categories $\C$ and
$\D$ are the category $\Set$ of sets.  An arbitrary set $S$ gives rise
to an adjunction $(F, U, \eta, \eps)$, where $F = - \times S$, $U =
(-)^S$, $\eta_X : X \to (X\times S)^S$ is given by $\eta_X(x) =
\lambda s. (x, s)$, and $\eps_X : X^S\times S\to X$ is given by
$\eps_X(f, s) = f(s)$.  The monad $(P, e^P, m^P)$ associated with this
adjunction is the state monad with the state $S$: $PX = (X\times
S)^S$, $e^P_X(x) = \eta_X(x) = \lambda s.(x, s)$, and $m^P_X(g) =
\eps_{FX}\circ g = \lambda s. \textup{ \textbf{let} } (f, s') = g(s)
\textup{ \textbf{in} } f(s')$.

More generally, suppose $(T, e^T, m^T)$ is a monad on $\Set$.  Let us
compute explicitly the monad $(P, e^P, m^P)$ obtained by translating
$T$ along the adjunction $(F, U, \eta, \eps)$.  We have: $PX =
(T(X\times S))^S$, $e^P_X(x) = e^T_{FX} \circ \eta_X(x) = \lambda
s. e^T_{X\times S}(x, s)$, $m^P_X(g) = m^T_{FX} \circ
T(\eps_{TFX})\circ g$.  Translating the last equation into Haskell
notation, we obtain:
\begin{verbatim}
join :: Monad m => (s -> m (s -> m (a, s), s)) -> s -> m (a, s)
join g = join . fmap ev . g where ev (f, s') = f s'
\end{verbatim}
which can be transformed as follows:
\begin{verbatim}
join g = \s -> join $ fmap ev $ g s

       -- definitions of `fmap' and `join'
       = \s -> (g s >>= (return . ev)) >>= id

       -- associativity axiom for monads
       = \s -> g s >>= (\(f, s') -> return (ev (f, s')) >>= id)

       -- left identity axiom for monads
       = \s -> g s >>= (\(f, s') -> f s')

       -- syntactic sugar
       = \s -> do (f, s') <- g s; f s'
\end{verbatim}
Modulo some newtype constructor wrapping/unwrapping this is precisely
the multiplication in the monad \verb|StateT s m|.

\section{Writer monad transformer}

Let $\C$ be the category $\Set$ of sets.  Let $M$ be a monoid with the
binary operation $\cdot : M\times M\to M$, $(m_1, m_2)\mapsto m_1\cdot
m_2$, and the neutral element $1\in M$.  Let $\D$ be the category
$M\textup{-}\Set$ of $M$-sets: objects are $M$-sets, i.e., pairs $(X,
a)$, where $X$ is a set and $a : X\times M \to X$, $(x, m) \mapsto
x^m$ is an action morphism such that $x^1 = x$ and $(x^{m_1})^{m_2} =
x^{m_1 \cdot m_2}$, for all $x\in X$ and $m_1, m_2\in M$, and
morphisms are $M$-equivariant maps, i.e., a morphism $f : (X, a) \to
(Y, b)$ is a map $f : X \to Y$ such that $f(x^m) = f(x)^m$, for all
$x\in X$ and $m\in M$.  There is an adjunction $(F, U, \eta, \eps)$
from $\Set$ to $M\textup{-}\Set$.  The functor $F : \Set \to M
\textup{-} \Set$ maps a set $X$ to the free $M$-set $X\times M$ with
the action $(X\times M)\times M\to X\times M$ given by $(x, m)^n = (x,
m\cdot n)$.  The functor $U : M \textup{-} \Set \to \Set$ is the
forgetful functor that maps an $M$-set $(X, a)$ to its underlying set
$X$.  The unit $\eta_X : X \to X\times M$ is given by $\eta_X(x) = (x,
1)$, and the counit $\eps_{(X, a)} : X\times M\to X$ is simply the
action morphism $a$.  The monad associated with this adjunction is the
writer monad with the monoid $M$.

We conclude by \proref{pro-trans-monad-along-adjunction} that a monad
$(T, e^T, m^T)$ on the category $\D = M \textup{-} \Set$ gives rise to
a monad on $\C = \Set$.  However, this is not what we would like to
have: we would like to produce a monad on $\Set$ out of another monad
$(T, e^T, m^T)$ on $\Set$, not on $M \textup{-} \Set$.  This is
possible if the monad $T$ is strong.  Let $t_{X, Y}: TX \times Y \to T
(X\times Y)$ be the strength of the monad $T$.  Then $(T, e^T, m^T)$
induces a monad $(\bar{T}, e^{\bar T}, m^{\bar T})$ on the category $M
\textup{-} \Set$.  Namely, if $(X, a)$ is an $M$-set, then the set
$TX$ becomes an $M$-set if we equip it with the action
\begin{equation}\label{eq-induced-action}
b = \Bigl[ TX\times M \xrightarrow{t_{X,M}} T(X\times M)
\xrightarrow{T(a)} TX \Bigr].
\end{equation}
Let us prove that $b$ is an action, i.e., that it satisfies the
identity and associativity conditions.  It is convenient to first
express these conditions diagrammatically.  A map $a : X \times M\to
M$ is an action if it satisfies the identity axiom:
\[
\Bigl[ X\xrightarrow[\sim]{\rho_X}X\times\unito \xrightarrow{\id_X
  \times 1_M} X\times M \xrightarrow{a} X \Bigr] = \id_X,
\]
and the associativity axiom:
\[
\begin{xy}
  (-20, 18)*+{(X\times M)\times M}="1";
  (20, 18)*+{X\times M}="2";
  (-20, 0)*+{X\times (M\times M)}="3";
  (-20, -18)*+{X\times M}="4";
  (20, -18)*+{X}="5";
  {\ar@{->}^-{a\times\id_M} "1";"2"};
  {\ar@{->}^-{a} "2";"5"};
  {\ar@{->}_-{\alpha_{X, M, M}}^-{\wr} "1";"3"};
  {\ar@{->}_-{\id_X\times\cdot} "3";"4"};
  {\ar@{->}^-{a} "4";"5"};
\end{xy}
\]
Here $\rho$ and $\alpha$ are the right unit and associativity
constraints of the monoidal structure induced by the cartesian
product, $\unito = \{*\}$ is the terminal object (a singleton), and
$1_M : \unito \to M$, $*\mapsto 1$.  Suppose that $a : X\times M \to
X$ is an action.  Let us prove that the map $b$ given by
\eqref{eq-induced-action} is also an action.  The identity axiom
is proven in the following diagram:
\[
\begin{xy}
  (-20, 18)*+{TX\times\unito}="1";
  (20, 18)*+{TX\times M}="2";
  (-20, 0)*+{T(X\times\unito)}="3";
  (20, 0)*+{T(X\times M)}="4";
  (-20, -18)*+{TX}="5";
  (20, -18)*+{TX}="6";
  {\ar@{->}^-{\id_{TX}\times 1_M} "1";"2"};
  {\ar@{->}^-{t_{X,\unito}} "1";"3"};
  {\ar@{->}_-{t_{X,M}} "2";"4"};
  {\ar@{->}^-{T(\id_X\times 1_M)} "3";"4"};
  {\ar@{->}_-{T(\rho_X)} "5";"3"};
  {\ar@{->}_-{T(a)} "4";"6"};
  {\ar@{=} "5";"6"};
  {\ar@{->}@/^3pc/^-{\rho_{TX}} "5";"1"};
  {\ar@{->}@/^3pc/^-{b} "2";"6"};
\end{xy}
\]
The top square commutes by the naturality of the strength $t$.  The
bottom square commutes by the identity axiom for the action $a$ (and
functoriality of $T$).  The left triangle is one of the strength
axioms, and the right triangle is the definition of $b$.  The
associativity axiom coincides with the exterior of the following
diagram:
\[
\begin{xy}
  (-45, 27)*+{(TX\times M)\times M}="1";
  (0, 27)*+{T(X\times M)\times M}="2";
  (45, 27)*+{TX\times M}="3";
  (0, 9)*+{T((X\times M)\times M)}="4";
  (45, 9)*+{T(X\times M)}="5";
  (-45, -9)*+{TX\times (M\times M)}="6";
  (0, -9)*+{T(X\times (M\times M))}="7";
  (-45, -27)*+{TX\times M}="8";
  (0, -27)*+{T(X\times M)}="9";
  (45, -27)*+{TX}="10";
  {\ar@{->}^-{t_{X, M}\times\id_M} "1";"2"};
  {\ar@{->}^-{T(a)\times \id_M} "2";"3"};
  {\ar@{->}_-{\alpha_{TX,M,M}} "1";"6"};
  {\ar@{->}^-{t_{X\times M, M}} "2";"4"};
  {\ar@{->}^-{t_{X, M}} "3";"5"};
  {\ar@{->}^-{T(a\times\id_M)} "4";"5"};
  {\ar@{->}^-{T(\alpha_{X,M,M})} "4";"7"};
  {\ar@{->}^-{T(a)} "5";"10"};
  {\ar@{->}^-{t_{X,M\times M}} "6";"7"};
  {\ar@{->}^-{\id_{TX}\times\cdot} "6";"8"};
  {\ar@{->}^-{T(\id_X\times\cdot)} "7";"9"};
  {\ar@{->}^-{t_{X,M}} "8";"9"};
  {\ar@{->}^-{T(a)} "9";"10"};
\end{xy}
\]
The left pentagon is one of the strength axioms.  The right pentagon
commutes by the associativity condition for $a$.  The remaining
squares commute by the naturality of $t$.

We have proven that once $(X, a)$ is an $M$-set, the pair $(TX, b)$,
where $b$ is given by \eqref{eq-induced-action}, is also an $M$-set.
We set $\bar{T}(X, a) = (TX, b)$.  Let us check that if $f : (X, a)
\to (X', a')$ is an $M$-equivariant map, then $T(f) : TX \to TX'$ is
actually an $M$-equivariant map $\bar{T}(X, a) \to \bar{T}(X', a')$.
This is easy: the equivariance of $f$ is expressed by the
commutativity of the diagram
\[
\begin{xy}
  (-15, 9)*+{X\times M}="1";
  (15, 9)*+{X'\times M}="2";
  (-15, -9)*+{X}="3";
  (15, -9)*+{X'}="4";
  {\ar@{->}^-{f\times\id_M} "1";"2"};
  {\ar@{->}_-{a} "1";"3"};
  {\ar@{->}^-{a'} "2";"4"};
  {\ar@{->}^-{f} "3";"4"};
\end{xy}
\]
The naturality of $t$ and the functoriality of $T$ imply that the
following diagram commutes, too:
\[
\begin{xy}
  (-20, 27)*+{TX\times M}="1";
  (20, 27)*+{TX'\times M}="2";
  (-20, 9)*+{T(X\times M)}="3";
  (20, 9)*+{T(X'\times M)}="4";
  (-20, -9)*+{TX}="5";
  (20, -9)*+{TX'}="6";
  {\ar@{->}^-{Tf\times\id_M} "1";"2"};
  {\ar@{->}_-{t_{X, M}} "1";"3"};
  {\ar@{->}^-{t_{X', M}} "2";"4"};
  {\ar@{->}^-{T(f\times\id_M)} "3";"4"};
  {\ar@{->}_-{T(a)} "3";"5"};
  {\ar@{->}^-{T(a')} "4";"6"};
  {\ar@{->}^-{T(f)} "5";"6"};
\end{xy}
\]
The vertical compositions are precisely the action morphisms of
$\bar{T}(X, a)$ and $\bar{T}(X', a')$.  Therefore, the above diagram
expresses the fact that the map $T(f) : TX\to TX'$ is indeed an
$M$-equivariant map $\bar{T}(X, a)\to \bar{T}(X', a')$.  Hence, we can
set $\bar{T}(f) = T(f)$.  Then $\bar{T}$ is a functor from the
category $M \textup{-} \Set$ to itself.  The functoriality of
$\bar{T}$ follows immediately from that of $T$.

One can also prove that the natural transformations $e^T_X : X\to TX$
and $m^T_X : TTX\to TX$ induce natural transformations
$e^{\bar{T}}_{(X, a)}: (X, a)\to \bar{T}(X, a)$ and $m^{\bar{T}}_{(X,
  a)} : \bar{T}\bar{T}(X, a)\to \bar{T}(X, a)$.  It suffices to check
that $e^T_X$ and $m^T_X$ are $M$-equivariant if $X$ is an $M$-set,
which follows directly from the strength axioms expressing the
compatibility of $t$ with the unit and multiplication of the monad
$T$.  For example, the equivariance of $m^T_X$ is proven in the
following diagram:
\[
\begin{xy}
  (-20, 27)*+{TTX\times M}="1";
  (20, 27)*+{TX\times M}="2";
  (-20, 9)*+{T(TX\times M)}="3";
  (-20, -9)*+{TT(X\times M)}="4";
  (20, -9)*+{T(X\times M)}="5";
  (-20, -27)*+{TTX}="6";
  (20, -27)*+{TX}="7";
  {\ar@{->}^-{m^T_X\times\id_M} "1";"2"};
  {\ar@{->}_-{t_{TX, M}} "1";"3"};
  {\ar@{->}^-{t_{X, M}} "2";"5"};
  {\ar@{->}_-{T(t_{X, M})} "3";"4"};
  {\ar@{->}^-{m^T_{X\times M}} "4";"5"};
  {\ar@{->}_-{TT(a)} "4";"6"};
  {\ar@{->}^-{T(a)} "5";"7"};
  {\ar@{->}^-{m^T_X} "6";"7"};
\end{xy}
\]
The pentagon is a strength axiom, and the square is a consequence of
the naturality of $m^T$.  A similar argument shows that $e^T_X$ is
also $M$-equivariant.

It is straightforward that the natural transformations $e^{\bar
  T}_{(X, a)}$ and $m^{\bar T}_{(X, a)}$ satisfy the monad laws,
because the underlying maps $e^T_X$ and $m^T_X$ satisfy these laws,
and because the action of $\bar T$ on morphisms coincides with that of
$T$.

Thus we have shown that a strong monad $(T, e^T, m^T)$ on the category
$\Set$ induces a monad $(\bar{T}, e^{\bar T}, m^{\bar T})$ on the
category $M\textup{-}\Set$, which can now be translated along the
adjunction $(F, U, \eta, \eps)$ by
\proref{pro-trans-monad-along-adjunction}.  Let us compute the
obtained monad $(P, e^P, m^P)$ explicitly.  The functor $P = U\bar{T}F
: \Set \to \Set$ is given by $PX = T(X\times M)$.  The unit $e^P_X : X
\to T(X\times M)$ of the monad $P$ is given by $e^P_X = e^T_{X\times
  M}\circ \eta_X = \lambda x.\; e^T_{X\times M}(x, 1)$, which is the
\verb|return| method of the writer monad transformer.  Let us compute
the multiplication.  By \proref{pro-trans-monad-along-adjunction},
$m^P_X = m^T_{X\times M}\circ T(b)$, where $b = T(a) \circ t_{X\times
  M} : T(X\times M)\times M\to T(X\times M)$ is the action morphisms
of $T(X\times M)$, and $a : (X\times M)\times M\to X\times M$, $((x,
m_1), m_2)\mapsto (x, m_1\cdot m_2)$ is the action morphism of
$X\times M$.  Translating this into Haskell notation, we obtain:
\begin{verbatim}
join = join . fmap b
    where
      b               = fmap a . strength
      a ((x, m1), m2) = (x, m1 `mappend` m2)
      strength (c, m) = c >>= (\r -> return (r, m))
\end{verbatim}
Eta-expanding this definition, we obtain:
\begin{verbatim}
join z = join (fmap b z)

       -- definitions of `join' and `fmap'
       = (z >>= (return . b)) >>= id

       -- associativity axiom for monads
       = z >>= \p -> return (b p) >>= id

       -- left identity axiom for monads
       = z >>= \p -> b p

       -- definition of `b'
       = z >>= \p -> fmap a (strength p)

       -- definition of `fmap'
       = z >>= \(c, m) -> strength (c, m) >>= (return . a)
\end{verbatim}
The expression \verb|strength (c, m) >>= (return . a)| can be further
transformed as follows:
\begin{verbatim}
strength (c, m) >>= (return . a)

    -- definition of `strength'
    = (c >>= (\r -> return (r, m))) >>= (return . a)

    -- associativity axiom for monads
    = c >>= \(x, n) -> return ((x, n), m) >>= (return . a)

    -- left identity axiom for monads
    = c >>= \(x, n) -> return $ a ((x, n), m)

    -- definition of `a'
    = c >>= \(x, n) -> return (x, n `mappend` m)
\end{verbatim}
Therefore
\begin{verbatim}
join z = z >>= \(c, m) -> c >>= \(x, n) -> return (x, n `mappend` m)

       -- syntactic sugar
       = do (c, m) <- z
            (x, n) <- c
            return (x, n `mappend` m)
\end{verbatim}
which is the multiplication in the monad \verb|WriterT w m|, modulo
some newtype contructor wrapping/unwrapping.

\begin{remark}
  There are other ways to explain why the functor $W = -\times M$ is
  part of a monad when $M$ is a monoid.  Let $\C$ be a cartesian
  category.  By currying the product functor $\times : \C \times \C
  \to \C$, we obtain a functor $\Gamma : \C \to [\C, \C]$, where $[\C,
  \C]$ denotes the category of functors from $\C$ to itself.  The
  functor $\Gamma$ maps an object $X\in\Ob\C$ to the functor $-\times
  X$ and a morphism $f$ to $f\times\id_X$.  The categories $\C$ and
  $[\C, \C]$ are naturally monoidal: in the former, the monoidal
  structure is given by cartesian product, and in the latter it is
  given by functor composition.  The functor $\Gamma$ is monoidal:
  $\Gamma(X\times Y) = -\times (X\times Y) \simeq (-\times Y) \circ
  (-\times X) = \Gamma Y \circ \Gamma X$.  The natural isomorphism is
  given by the associativity constraint of the monoidal structure
  induced by cartesian product.  Therefore, $\Gamma$ takes monoids in
  $\C$ to monoids in $[\C, \C]$.  The latter are precisely monads on
  the category $\C$.
\end{remark}

\begin{remark}
  The category $\D = M \textup{-} \Set$ introduced in this section is
  precisely the category of algebras over the monad $W = -\times M$,
  and the adjunction from $\C$ to $\D$ constructed above is an
  instance of the general construction of an adjunction from the
  category $\C$ to the category $\C^W$ of algebras over $W$.
\end{remark}

\begin{remark}
  Let $T$ be a strong monad on $\C$.  Suppose that $M$ is a monoid in
  $\C$.  The strength $t_{X, M} : TX\times M \to T (X\times M)$ is a
  natural transformation $t : (-\times M)\circ T \to T\circ (-\times
  M)$.  The functor $W = -\times M$ is a monad, and $t$ can be viewed
  as a \emph{distributive law} of the monad $T$ over $W$.  In fact,
  the four axioms of distributive laws reduce in this case to the four
  axioms of strengths.  This yields an alternative argument for why
  the composition of the monads $W$ and $T$ is again a monad.
  Furthermore, this also explains why the monad $T$ lifts to a monad
  $\bar{T}$ on the category $\C^W$ of algebras over the monad $W$:
  distributive laws $WT\to TW$ are in bijection with such liftings
  \cite{MR0241502}.
\end{remark}

\section{Reader monad transformer}

Although the considerations below can be carried out in any cartesian
closed category $\C$, we assume for the sake of simplicity that $\C$
is the category $\Set$ of sets.  Let $E$ be a set.  Define $\D$ to be
the category whose objects are the objects of $\C$, and for each pair
of objects $X$ and $Y$, the set of morphisms $\D(X, Y)$ is equal to
$\C(X\times E, Y)$.  We can think of $\D(X, Y)$ as the set of families
of functions from $X$ to $Y$ parametrized by $E$.  The identity
morphism of an object $X$ in $\D$ is the projection $\pr_1 : X\times E
\to X$.  Composition of morphisms $f: X\to Y$ and $g : Y\to Z$ in $\D$
(i.e., of maps $f : X \times E\to Y$ and $g : Y \times E\to Z$ in $\C$)
is given by
\[
g\ast f = \Bigl [X\times E\xrightarrow{\id_X\times\Delta} X\times (E\times E)
\xrightarrow{\alpha^{-1}_{X, E, E}} (X\times E)\times E
\xrightarrow{f\times\id_E} Y\times E\xrightarrow{g} X \Bigr].
\]
Here $\Delta : E\to E\times E$ is the diagonal map.  In other words,
$(g\ast f)(x, e) = g(f(x, e), e)$ for all $(x, e)\in X\times E$, but
it is helpful to have a diagrammatic representation of composition
that does not refer to elements.

There is an adjunction $(F, U, \eta, \eps)$ from $\C$ to $\D$.  The
functor $F : \C \to \D$ maps each object $X\in\Ob\C$ to itself and
each morphism $f : X\to Y$ to the composition $f\circ\pr_1 : X\times
E\to Y$.  In other words, to a function $f$ the functor $F$ assigns
the constant family of functions.  With this interpretation, $F$ is
clearly a functor.  The functor $U : \D \to \C$ maps an object
$X\in\Ob\D = \Ob\C$ to the exponential $X^E$, and a morphism $f : X\to
Y$ in $\D$ (i.e., a morphism $f : X\times E\to Y$ in $\C$) to the
morphisms corresponding to the composite
\[
f\ast\ev = \Bigl[ X^E\times E\xrightarrow{\id_{X^E}\times\Delta}
X^E\times (E\times E) \xrightarrow{\alpha^{-1}_{X^E, E, E}}(X^E\times
E)\times E \xrightarrow{\ev\times\id_E} X\times E\xrightarrow{f} Y
\Bigr]
\]
by the closedness of the category $\C$.  Here $\ev : X^E\times E\to X$
is the evaluation morphism.  Because we are assuming that $\C$ is the
category of sets, $U(f)$ can be written as $\lambda g.\; \lambda e.\;
f(g(e), e)$.  Informally, an $E$-indexed family of functions $\{f_e :
X\to Y\}_{e\in E}$ is mapped to the function that takes an $E$-indexed
family of elements $\{x_e\}_{e\in E}$ of the set $X$ as input and
applies each function to the corresponding element, producing a new
$E$-indexed family of elements $\{f_e(x_e)\}_{e\in E}$ of elements of
the set $Y$.  This makes it obvious that $U$ is a functor.  The unit
$\eta_X : X \to FUX = X^E$ is given by $\eta_X(x) = \lambda e.\; x$.
The counit $\eps_X : FUX \to X$ is a morphism in $\D$ represented by
the evaluation morphism $\ev : X^E\times E\to X$ in $\C$.  The monad
associated with this adjunction is precisely the reader monad with the
environment $E$.

Let $(T, e^T, m^T, t)$ be a strong monad on the category $\C$.  Like
in the case of writer monad transformer, we would like to lift $T$ to
a monad $\bar{T}$ on the category $\D$, which we then could translate
back to $\C$ along the adjunction $(F, U, \eta, \eps)$.  The functor
$T$ is lifted to the category $\D$ as follows: $\bar{T}X = TX$ and for
each morphism $f\in\D(X, Y) = \C(X\times E, Y)$, we set
\[
\bar{T}(f) = \Bigl[ TX\times E\xrightarrow{t_{X, E}} T(X\times E)
\xrightarrow{T(f)} TY \Bigr].
\]
Let us check that $\bar{T}$ preserves composition and identities.  Let
$f\in\D(X, Y)=\C(X\times E, Y)$ and $g\in\D(Y, Z)=\C(Y\times E, Z)$.
The equation $\bar{T}(g\ast f) = \bar{T}(g)\ast \bar{T}(f)$
coincides with the exterior of the diagram:
\[
\begin{xy}
  (-22, 36)*+{TX\times E}="1";
  (22, 36)*+{T(X\times E)}="2";
  (-22, 18)*+{TX\times (E\times E)}="3";
  (22, 18)*+{T(X\times (E\times E))}="4";
  (-22, 0)*+{(TX\times E)\times E}="5";
  (-22, -18)*+{T(X\times E)\times E}="6";
  (22, -18)*+{T((X\times E)\times E)}="7";
  (-22, -36)*+{TY\times E}="8";
  (22, -36)*+{T(Y\times E)}="9";
  (44, -36)*+{TZ}="10";
  {\ar@{->}^-{t_{X,E}} "1";"2"};
  {\ar@{->}_-{\id_{TX}\times\Delta} "1";"3"};
  {\ar@{->}^-{T(\id_X\times\Delta)} "2";"4"};
  {\ar@{->}^-{t_{X, E\times E}} "3";"4"};
  {\ar@{->}_-{\alpha^{-1}_{TX, E, E}} "3";"5"};
  {\ar@{->}^-{T(\alpha^{-1}_{X,E,E})} "4";"7"};
  {\ar@{->}_-{t_{X,E}\times\id_E} "5";"6"};
  {\ar@{->}^-{t_{X\times E, E}} "6";"7"};
  {\ar@{->}_-{T(f)\times\id_E} "6";"8"};
  {\ar@{->}^-{T(f\times\id_E)} "7";"9"};
  {\ar@{->}^-{t_{Y,E}} "8";"9"};
  {\ar@{->}^-{T(g)} "9";"10"};
\end{xy}
\]
The squares commute by the naturality of $t$.  The pentagon is, up to
the orientation of the associativity isomorphism, one of the strength
axioms.  Preservation of identities follows from the diagram:
\[
\begin{xy}
  (-30, 18)*+{TX\times E}="1";
  (0, 18)*+{T(X\times E)}="2";
  (-30, 0)*+{TX\times\unito}="3";
  (0, 0)*+{T(X\times\unito)}="4";
  (30, 0)*+{TX}="5";
  {\ar@{->}^-{t_{X,E}} "1";"2"};
  {\ar@{->}_-{\id_{TX}\times !_E} "1";"3"};
  {\ar@{->}_-{T(\id_X\times !_E)} "2";"4"};
  {\ar@{->}^-{t_{X,\unito}} "3";"4"};
  {\ar@{->}^-{T(\pr_1)} "4";"5"};
  {\ar@{->}^-{T(\pr_1)} "2";"5"};
  {\ar@{->}@/_2pc/_{\pr_1} "3";"5"};
\end{xy}
\]
The square commutes by the naturality of $t$.  The commutativity of
the right triangle follows from the obvious identity
$\pr_1\circ(\id_X\times !_E) = \pr_1$ and functoriality of $T$.  The
bottom triangle is one of the strength axioms (note that $\rho^{-1}_X
= \pr_1 : X\times\unito\to X$).  The left-bottom composite is equal to
$\pr_1 : TX\times E\to TX$, which represents the identity morphism of
$TX$ in the category $\D$.

We have shown that $\bar{T}$ is a functor from the category $\D$ to
itself.  Let us check that the families of morphisms
\begin{align*}
  e^{\bar{T}}_X & = e^T_X\circ\pr_1 = F(e^T_X) \in \C(X\times E, TX) =
  \D(X, \bar{T}X),
  \\
  m^{\bar{T}}_X & = m^T_X\circ\pr_1 = F(m^T_X) \in \C(TTX\times E, TX)
  = \D(\bar{T}\bar{T}X, \bar{T}X)
\end{align*}
are natural transformations $\Id_{\D}\to\bar{T}$ and
$\bar{T}\bar{T}\to\bar{T}$.  We only give a proof for the second
family.  Let $f\in\D(X, Y)=\C(X\times E, Y)$.  We have to show that
the diagram
\[
\begin{xy}
  (-15, 9)*+{\bar{T}\bar{T}X}="1";
  (15, 9)*+{\bar{T}\bar{T}Y}="2";
  (-15, -9)*+{\bar{T}X}="3";
  (15, -9)*+{\bar{T}Y}="4";
  {\ar@{->}^-{\bar{T}\bar{T}(f)} "1";"2"};
  {\ar@{->}_-{m^{\bar T}_X} "1";"3"};
  {\ar@{->}^-{m^{\bar T}_Y} "2";"4"};
  {\ar@{->}^-{\bar{T}(f)} "3";"4"};
\end{xy}
\]
commutes in $\D$.  Using the definition of composition in $\D$ and the
definition of $\bar{T}$, it is not difficult to convince yourself that
this diagram coincides with the exterior of the following diagram:
\[
\begin{xy}
  (-75, 36)*+{TTX\times E}="1";
  (-75, 18)*+{TTX\times (E\times E)}="2";
  (-75, 0)*+{(TTX\times E)\times E}="3";
  (-25, 0)*+{T(TX\times E)\times E}="4";
  (25, 0)*+{TT(X\times E)\times E}="5";
  (70, 0)*+{TTY\times E}="6";
  (-75, -18)*+{TTX\times E}="7";
  (-25, -18)*+{T(TX\times E)}="8";
  (25, -18)*+{TT(X\times E)}="9";
  (70, -18)*+{TTY}="10";
  (-75, -36)*+{TX\times E}="11";
  (25, -36)*+{T(X\times E)}="12";
  (70, -36)*+{TY}="13";
  {\ar@{->}_-{\id_{TTX}\times\Delta} "1";"2"};
  {\ar@{->}_-{\alpha^{-1}_{TTX,E,E}} "2";"3"};
  {\ar@{->}^-{t_{TX,E}\times\id_E} "3";"4"};
  {\ar@{->}^-{T(t_{X,E})\times\id_E} "4";"5"};
  {\ar@{->}^-{TT(f)\times\id_E} "5";"6"};
  {\ar@{->}_-{\pr_1\times\id_E} "3";"7"};
  {\ar@{->}_-{\pr_1} "4";"8"};
  {\ar@{->}_-{\pr_1} "5";"9"};
  {\ar@{->}^-{\pr_1} "6";"10"};
  {\ar@{->}^-{t_{TX,E}} "7";"8"};
  {\ar@{->}^-{T(t_{X,E})} "8";"9"};
  {\ar@{->}^-{TT(f)} "9";"10"};
  {\ar@{->}_-{m^T_X\times\id_E} "7";"11"};
  {\ar@{->}_-{m^T_{X\times E}} "9";"12"};
  {\ar@{->}^-{t^T_Y} "10";"13"};
  {\ar@{->}^-{t_{X,E}} "11";"12"};
  {\ar@{->}^-{T(f)} "12";"13"};
\end{xy}
\]
The three top squares commute for trivial reasons.  The remaining
square commutes by the naturality of $m^T$.  The pentagon is one of
the strength axioms.

We have shown that $e^{\bar{T}}_X = F(e^T_X)$ and $m^{\bar{T}}_X =
F(m^T_X)$ give rise to natural transformations $\Id_{\D}\to\bar{T}$
and $\bar{T}\bar{T}\to\bar{T}$, respectively.  That they satisfy the
monad laws follows from the fact that $e^T$ and $m^T$ satisfy the
monad laws, and from the functoriality of $F$.

Thus, we have lifted the monad $(T, e^T, m^T)$ on the category $\C$ to
the monad $(\bar{T}, e^{\bar T}, m^{\bar T})$ on the category $\D$,
which we can now translate along the adjunction $(F, U, \eta, \eps)$
back to the category $\C$.  The composite monad $(P, e^P, m^P)$ is
given by $PX = (TX)^E$,
\begin{align*}
  e^P_X(x) & = (\lambda g.\;\lambda e.\; e^T_X(g(e)))(\lambda e.\; x) =
  \lambda e. e^T_X(x),\\
  m^P_X & = (\lambda g.\; \lambda e.\; m^T_X(g(e)))\circ (\lambda h.\;
  \lambda e.\; T(\ev) (t_{(TX)^E, E}(h(e), e))),
\end{align*}
where $\ev: (TX)^E\times E\to TX$ is the evaluation map.
Eta-expanding the definition of $m^P_X$, we obtain:
\[
m^P_X(h) = \lambda e.\; m^T_X(T(\ev)(t_{(TX)^E, E}(h(e), e))).
\]
Translating this into Haskell notation, we obtain:
\begin{verbatim}
join h = \e -> join $ fmap ev $ strength (h e, e)
    where
      ev (f, v)       = f v
      strength (c, y) = c >>= \x -> return (x, y)
\end{verbatim}
which can be transformed as follows:
\begin{verbatim}
join h = \e -> join $ fmap ev $ strength (h e, e)

       -- definitions of `join' and `fmap'
       = \e -> (strength (h e, e) >>= (return . ev)) >>= id

       -- associativity axiom for monads
       = \e -> strength (h e, e) >>= \(f, v) -> return (ev (f, v)) >>= id

       -- left identity axiom for monads
       = \e -> strength (h e, e) >>= \(f, v) -> ev (f, v)

       -- definition of `ev'
       = \e -> strength (h e, e) >>= \(f, v) -> f v

       -- definition of `strength'
       = \e -> (h e >>= \x -> return (x, e)) >>= \(f, v) -> f v

       -- associativity axiom for monads
       = \e -> h e >>= \x -> return (x, e) >>= \(f, v) -> f v

       -- left identity axiom for monads
       = \e -> h e >>= \f -> f e

       -- syntactic sugar
       = \e -> do f <- h e
                  f e
\end{verbatim}
Modulo newtype constructor wrapping/unwrapping, this is precisely the
multiplication in the monad \verb|ReaderT e m|.

\begin{remark}
  Here is another, higher-level explanation of why the functor $R =
  (-)^E$ is part of a monad.  Let $\C$ be a cartesian closed category.
  Currying the internal hom functor $(-)^- : \C^{\op}\times \C\to \C$,
  $(X, Y) \mapsto Y^X$, we obtain a functor $\Gamma : \C^{\op}\to [\C,
  \C]$, $X\mapsto (-)^X$.  The functor $\Gamma$ is monoidal:
  $\Gamma(X\times Y) = (-)^{X\times Y} \simeq (-)^X\circ (-)^Y =
  \Gamma X\circ \Gamma Y$.  Any object $E$ of the category $\C$ is
  naturally a coalgebra (more appropriately called `comonoid') in
  $\C$: the comultiplication $\Delta : E\to E\times E$ is the diagonal
  morphism (a unique morphism such that $\pr_1\circ\Delta =
  \pr_2\circ\Delta = \id_E$), and the counit $\eps : E\to \unito$ is
  the final morphism $!_E$.  Therefore, $E$ is a monoid in the
  opposite category $\C^{\op}$.  The monoidal functor $\Gamma$ takes
  monoids in $\C^{\op}$ to monoids in $[\C,\C]$.  The latter are
  precisely monads on the category $\C$.  Therefore, $\Gamma E$ has
  the structure of a monad.
\end{remark}

\begin{remark}
  The category $\D$ introduced at the beginning of this section is
  isomorphic to the Kleisli category $\C_R$ of the monad $R$: $\Ob\C_R
  = \Ob\C$, $\C_R(X, Y) = \C(X, RY) = \C(X, Y^E)$, the identity of an
  object $X$ is the unit $e^R_X : X\to X^E$ of the monad, and
  composition is given by $g\circ_R f = m^R_Z \circ R(g)\circ f$, for
  all $f\in\C_R(X, Y) = \C(X, Y^E)$, $g\in\C_R(Y, Z) = \C(Y, Z^E)$.
  The isomorphism between $\D$ and $\C_R$ is given by the identity map
  on objects and the closedness bijection $\C(X\times E, Y) \simeq
  \C(X, Y^E)$ on morphisms.  The adjunction from $\C$ to $\D$
  introduced above is an instance of a general construction, the
  adjunction from the category $\C$ to the Kleisli category of the
  monad $R$, translated along this isomorphism of categories.
\end{remark}

\section{Error monad transformer}

Let $\C$ be the category of sets.  Let $E$ be a set.  Consider the
category $\D = E/\C$, the under category (also known as coslice
category) of the object $E$: objects are maps $\varphi : E\to X$ in
$\C$, and a morphism from $\varphi : E \to X$ to $\psi : E \to Y$ is a
map $f : X\to Y$ such that $f\circ\varphi = \psi$.  Composition and
identities are the obvious ones.  There is an adjunction $(F, U, \eta,
\eps)$ from $\C$ to $\D$.  The functor $F$ maps a set $X$ to the
morphism $\inl : E \to E+X$ and a map $f$ to $\id_E + f$.  Here $+$
denotes disjoint union of sets.  The functor $U : \D \to \C$ is the
forgetful functor, mapping a function $E\to X$ to its codomain $X$.
The unit $\eta_X : X\to E+X$ is the map $\inr$, and the counit is the
morphism $\eps_{\varphi : E\to X}: (E\xrightarrow{\inl} E+X) \to
(E\xrightarrow{\varphi} X)$ given by $\varphi \vee \id_X$.  Here for a
pair of maps $f : X\to Z$ and $g : Y\to Z$, $f\vee g$ denotes the
unique map $X+Y\to Z$ such that $(f\vee g)\circ\inl = f$ and $(f\vee
g)\circ\inr = g$.  The monad associated with this adjunction is
precisely the error monad with the set of errors $E$.

Let $(T, e^T, m^T)$ be a monad on the category $\C=\Set$.  First of
all, the functor $T$ lifts to a functor $\bar{T}$ on the category $\D
= E/\Set$: $\bar{T}(\varphi : E\to X) = T(\varphi)\circ e^T_E$, and
$\bar{T}(f) = T(f)$ for each $f \in \D(\varphi : E \to X, \psi : E\to
Y) \subset \C(X, Y)$.  Clearly, $\bar{T}$ preserves identities and
composition.  Furthermore, the families of morphisms $e^T_X : X\to TX$
and $m^T_X : TTX \to TX$ can also be viewed as natural transformations
$e^{\bar T}_{\varphi : E\to X} : (E\xrightarrow{\varphi} X)\to
(E\xrightarrow{T(\varphi)\circ e^T_E}TX)$ and
\[
m^{\bar T}_{\varphi : E\to X} : (E\xrightarrow{TT(\varphi)\circ
  T(e^T_E)\circ e^T_E}TTX)\to (E\xrightarrow{T(\varphi)\circ e^T_E} TX).
\]
That $e^{\bar T}$ is a morphism in the category $\D$ is a consequence
of the naturality of $e^T$, and that $m^{\bar T}$ is a morphism in
$\D$ follows from the naturality of $m^T$ and the right identity axiom
for monads.  The naturality of $e^{\bar T}$ and $m^{\bar T}$ follows
from the naturality of $e^T$ and $m^T$.  Hence, $(\bar{T}, e^{\bar T},
m^{\bar T})$ is a monad on the category $\D$, which we can now
translate back to the category $\C$.  The composite monad $(P, e^P,
m^P)$ is given by $PX = T(E+X)$, $e^P_X = e^T_{E+X}\circ\inr$, $m^P_X
= m^T_{E+X}\circ T((T(\inl)\circ e^T_E)\vee\id_{T(E+X)})$.  Translating
the last equation into Haskell notation, we obtain:
\begin{verbatim}
join = join . fmap eps
    where
      eps = either (fmap Left . return) id
\end{verbatim}
Eta-expanding and transforming this equation, we obtain:
\begin{verbatim}
join z = join (fmap eps z)

       -- definitions of `join' and `fmap'
       = (z >>= (return . eps)) >>= id

       -- axioms of monads (see above)
       = z >>= eps

       -- syntactic sugar
       = do y <- z
            eps y

       -- definition of `either'
       = do y <- z
            case y of
              Left e  -> (fmap Left . return) e
              Right x -> id x

       -- naturality of `return': fmap Left . return = return . Left
       = do y <- z
            case y of
              Left e  -> return (Left e)
              Right x -> x
\end{verbatim}
Modulo newtype constructor wrapping/unwrapping this is precisely the
multiplication in \verb|ErrorT e m|.

\bibliographystyle{plain}
\bibliography{mathscinet,calc-mts-with-cat-th}

\begin{thebibliography}{1}

\bibitem{MR0241502}
Jon Beck.
\newblock Distributive laws.
\newblock In B.~Eckmann, editor, {\em Seminar on Triples and Categorical
  Homology Theory}, pages 119--140, Berlin, Heidelberg, 1969. Springer Berlin
  Heidelberg.

\bibitem{calc-mon-cat-th}
Derek Elkins.
\newblock Calculating monads with category theory.
\newblock {\em The Monad.Reader}, 13, 2009.

\bibitem{MR1712872}
Saunders Mac~Lane.
\newblock {\em Categories for the working mathematician}.
\newblock Springer-Verlag, New York-Berlin, 1971.
\newblock Graduate Texts in Mathematics, Vol. 5.

\bibitem{MR1115262}
Eugenio Moggi.
\newblock Notions of computation and monads.
\newblock {\em Information and Computation}, 93(1):55--92, 1991.
\newblock Selections from 1989 IEEE Symposium on Logic in Computer Science.

\end{thebibliography}

\end{document}